\documentclass[12pt,reqno]{amsart}%,reqno
\usepackage{amssymb,amsmath,amsthm}
\usepackage{a4}
\usepackage{enumitem}
\usepackage{color}
%\usepackage{maa-monthly}
% IF YOU HAVE FONTS INSTALLED
%\usepackage{mtpro2}
%\usepackage{mathtime}
\usepackage[colorlinks=true,
linkcolor=webgreen,
filecolor=webbrown,
citecolor=webgreen]{hyperref}
\definecolor{webgreen}{rgb}{0,0,1}%{1,0.0,.6}
\definecolor{recrown}{rgb}{1,.2,.6}
\usepackage{fullpage}
\usepackage{orcidlink}

%\theoremstyle{theorem}
%\newtheorem{theorem}{Theorem}
%\theoremstyle{definition}
%\newtheorem*{definition}{Definition}
%\newtheorem*{remark}{Remark}
%\documentclass[12pt,reqno]{article}
%amsmath, amsthm, amsfonts, amssymb

\begin{document}
\newtheorem{theorem}{Theorem}
\newtheorem{corollary}[theorem]{Corollary}
\newtheorem{lemma}[theorem]{Lemma}
\theoremstyle{definition}
\newtheorem{example}{Example}
\newtheorem*{examples}{Examples}
\newtheorem*{notation}{Notation}
\theoremstyle{theorem}
\newtheorem{thmx}{Theorem}
\renewcommand{\thethmx}{\text{\Alph{thmx}}}% "letter-numbered" theorems
\newtheorem{lemmax}{Lemma}
\renewcommand{\thelemmax}{\text{\Alph{lemmax}}}% "
%\leftmargin=.5in
%\rightmargin=0.5in
%\textwidth=6truein
%\textheight=11.6truein
\hoffset=-0cm
%\voffset=+-2cm
\theoremstyle{definition}
\newtheorem*{definition}{Definition}
\newtheorem*{remark}{\bf Remark}
\title{\bf Prime numbers and factorization of polynomials}
\author{Jitender Singh$^{1,\dagger}$ {\large \orcidlink{0000-0003-3706-8239}}}
\address[1]{Department of Mathematics,
Guru Nanak Dev University, Amritsar-143005, India\newline %\linebreak
 {\tt jitender.math@gndu.ac.in}}
\markright{}
\date{}
\footnotetext[2]{Corresponding author email(s): %{\tt jitender.math@gndu.ac.in}

%\url{https://sites.google.com/view/sonumaths3/home}\\

2020MSC: {Primary 11R09; 12J25; 12E05; 11C08}\\

\emph{Keywords}: Irreducible polynomial; Cohn's irreducibility criterion; Polynomial factorization;  non-Archimedean absolute value; bivariate polynomial.
}
\maketitle
\newcommand{\K}{\mathbb{K}}
\begin{abstract}
In this article, we obtain upper bounds on the number of irreducible factors of some classes of  polynomials having integer coefficients, which in particular yield some of the well known irreducibility criteria.  For devising our results, we use the information about prime factorization of the values taken by such polynomials at sufficiently large integer arguments along with the information about their root location in the complex plane. Further, these techniques are extended to bivariate polynomials over arbitrary fields using non-Archimedean absolute values, yielding extensions of the irreducibility results of M. Ram Murty and S. Weintraub to bivariate polynomials.
\end{abstract}
\section{Introduction}
Irreducibility of polynomials having integer coefficients has been an active area of research since the classical irreducibility criteria due to Sch\"onemann \cite{S}, Eisenstein \cite{E}, Perron \cite{P}, and Dumas \cite{Du}. Irreducible polynomials are fundamental in mathematics and application in computer science. A number of new criteria for testing irreducibility of  polynomials having integer coefficients have been emerged in the recent past and for a quick review, the reader may refer to see \cite{SKJS2023}. Irreducible polynomials can produce prime numbers for finitely many consecutive integer arguments. In view of this, Euler \cite{Euler} in 1772 showed that the polynomial $41-x+x^2$ produces  prime numbers for 40 consecutive numbers for $x$ from $1$ to $40$ (see for detail, the sequence A005846 in \cite{NJASloane}), where we note that the polynomial $41-x+x^2$ is irreducible in $\mathbb{Z}[x]$.  For many other such examples of prime producing polynomials, the reader may refer to see the sequences A007641, A050267, A050268 and cross references therein in The On-Line Encyclopedia of Integer Sequences \cite{NJASloane}.
%, since 41 is prime and the zeros of this polynomial lie outside the closed unit disc in the complex plane.
On the other hand, prime numbers can produce irreducible polynomials. This is evident from the irreducibility criterion of A. Cohn (see P\'olya and Szeg\"o \cite[p.~113]{Polya}) which states that if  a prime number $p$ has decimal expansion
\begin{eqnarray*}
p=a_0+a_1 10^1+\cdots+a_n 10^n,~0\leq a_i\leq 9,
\end{eqnarray*}
then the polynomial
\begin{eqnarray*}
a_0+a_1 x+\cdots+a_n x^n
\end{eqnarray*}
is irreducible in $\mathbb{Z}[x]$. This result was extended to arbitrary base $\geq 2$ in \cite{Brillhart,Bonciocat4}. Another instance witnessing the affinity between prime numbers and irreducible polynomials is the Buniakowski's conjecture (1854) which states that if $f\in \mathbb Z[x]$ is an irreducible polynomial such that the integers in the set $f (\mathbb N)$ have no factor in common other than $\pm 1$, then $f$ takes prime values infinitely often. Buniakowski's conjecture is still open, but its converse is true, that is, if $f$ takes prime values for infinitely many values of integer arguments, then the polynomial $f$ must be irreducible in $\mathbb{Z}[x]$. In view of this, Murty in \cite{Mu} proved the following  exquisite irreducibility criterion.
\begin{thmx}[\cite{Mu}]\label{M} Let $f=a_0+a_1x+\cdots +a_nx^n\in \mathbb Z[x]$ be a polynomial of degree $n$ and define
\begin{eqnarray*}
H_f=\max_{0\leq i\leq n-1}|a_i|/|a_n|.
\end{eqnarray*}
If $f(m)$ is a prime number for some integer $m\geq H_f+2$, then the polynomial $f$ is irreducible in $\mathbb{Z}[x]$.
\end{thmx}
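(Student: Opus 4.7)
The plan is to combine a complex-analytic bound on the roots of $f$ with the multiplicative structure forced by primality of $f(m)$. I would proceed in three steps.

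\medskip

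\textbf{Step 1: Locate the roots of $f$.} First I would show that every complex root $\alpha$ of $f$ satisfies $|\alpha|<H_f+1$. The case $|\alpha|\leq 1$ is trivial. If $|\alpha|>1$, dividing the equation $a_n\alpha^n=-(a_0+a_1\alpha+\cdots+a_{n-1}\alpha^{n-1})$ by $a_n\alpha^n$ and taking absolute values gives
\begin{equation*}
1\leq H_f\sum_{i=1}^{n}|\alpha|^{-i}<\frac{H_f}{|\alpha|-1},
\end{equation*}
from which $|\alpha|<H_f+1$ follows. This is the standard Cauchy-type bound, rephrased in terms of $H_f$.

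\medskip

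\textbf{Step 2: Factor and use primality.} Suppose, for contradiction, that $f=g\cdot h$ with $g,h\in\mathbb{Z}[x]$ both of positive degree. Evaluating at $m$ gives $f(m)=g(m)h(m)$, and since $f(m)$ is prime, exactly one of $|g(m)|$, $|h(m)|$ equals $1$; say $|g(m)|=1$.

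\medskip

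\textbf{Step 3: Derive a contradiction from the root bound.} Writing $g(x)=c\prod_i(x-\alpha_i)$ where $c\in\mathbb{Z}\setminus\{0\}$ and each $\alpha_i$ is a root of $f$ (hence of $g$ as well), I would use Step~1 and the hypothesis $m\geq H_f+2$ to estimate
\begin{equation*}
|m-\alpha_i|\geq m-|\alpha_i|>m-(H_f+1)\geq 1
\end{equation*}
for every $i$. Since $g$ is non-constant and $|c|\geq 1$, this forces $|g(m)|>1$, contradicting $|g(m)|=1$. Hence $f$ admits no nontrivial factorization in $\mathbb{Z}[x]$.

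\medskip

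There is no serious obstacle here: the root bound in Step~1 is a one-line manipulation of a geometric sum, and Step~3 is a direct consequence once one notes that the strict inequality $|\alpha_i|<H_f+1$ (not just $\leq$) combined with $m\geq H_f+2$ yields strict inequality $|m-\alpha_i|>1$. The only subtle point worth being careful about is ensuring the inequality is strict, so that $|g(m)|=1$ is genuinely excluded rather than merely borderline; this is where the choice of the threshold $H_f+2$ (rather than $H_f+1$) is essential.
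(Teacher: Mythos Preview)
Your proposal is correct and follows essentially the same route as the paper: the paper establishes the root bound $|\theta|<H_f+1$ via the same geometric-series estimate (phrased slightly differently, as $|f(x)|/(|a_n||x|^n)\geq (H_f+1)^{-n}>0$ for $|x|\geq H_f+1$), and then derives Theorem~A as the special case $d=1$, $\nu_m=1$ of Theorem~\ref{th1}, whose proof is exactly your Step~2--3 argument that each nonconstant factor $f_i$ satisfies $|f_i(m)|>1$.
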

In \cite{G}, Girstmair extended Theorem \ref{M} to the case when $f$ takes a composite value equal to a prime number times an integer at a sufficiently large integer argument and obtained the following result.
\begin{thmx}[\cite{G}]\label{G}
Let $f=a_0+a_1x+\cdots +a_nx^n\in \mathbb Z[x]$ be a primitive polynomial of degree $n$. Suppose there exist natural numbers $d$,  $m$, and a prime $p\nmid d$  with $m\geq H_f+d+1$ and $f(m)=\pm d\cdot p$. Then the polynomial $f$ is irreducible in $\mathbb{Z}[x]$.
\end{thmx}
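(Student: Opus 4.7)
The plan is to adapt the factorization-plus-root-bound argument behind Theorem~\ref{M}. Suppose, for contradiction, that $f=gh$ is a non-trivial factorization in $\mathbb Z[x]$. Since $f$ is primitive, Gauss's lemma lets us take $g$ and $h$ primitive with $k:=\deg g\geq 1$ and $\deg h=n-k\geq 1$. Evaluating at $m$ gives $g(m)h(m)=\pm dp$, and both integers are non-zero because $f(m)=\pm dp\neq 0$.

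I would next exploit the arithmetic hypothesis $p\nmid d$. Since $p$ is prime and appears to the first power in the product $dp$, it divides exactly one of $|g(m)|,|h(m)|$; relabeling if necessary, say $p\mid h(m)$. Then $|g(m)|$ is a positive divisor of $d$, so $1\leq |g(m)|\leq d$. On the analytic side, Cauchy's classical root bound guarantees that every complex root $\beta$ of $f$, and in particular of $g$, satisfies $|\beta|<1+H_f$. Combined with $m\geq H_f+d+1$, this yields
\[
|m-\beta|\geq m-|\beta|>m-1-H_f\geq d
\]
for each root $\beta$ of $g$. Writing $g(x)=b_k\prod_{i=1}^{k}(x-\beta_i)$ with $b_k\in\mathbb Z\setminus\{0\}$, one obtains
\[
|g(m)|=|b_k|\prod_{i=1}^{k}|m-\beta_i|>|b_k|\,d^k\geq d,
\]
which contradicts the divisibility bound $|g(m)|\leq d$ established above.

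I do not anticipate any substantive obstacle: once the dichotomy on $p$ is in place, the proof reduces to a one-line numerical squeeze. The only point requiring a little care is the clean split of the prime $p$ between $|g(m)|$ and $|h(m)|$, which is precisely where the hypothesis $p\nmid d$ enters; without it, each factor could absorb part of a higher $p$-power and the bound $|g(m)|\leq d$ would break down. Thus the two inputs to the argument are \emph{Gauss's lemma} (to preserve integrality of the factors) and \emph{Cauchy's root bound} (to force each non-constant factor to be at least $d+1$ in absolute value at $m$), both of which are classical.
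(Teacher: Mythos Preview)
Your argument is correct and essentially coincides with the paper's approach: the paper derives Theorem~\ref{G} as the special case $\nu_m=1$ of Theorem~\ref{th1}, whose proof uses exactly the same two ingredients you identify---the root bound $|\theta|<H_f+1$ to force every nonconstant factor to satisfy $|f_i(m)|>d$, and the arithmetic constraint (here, that $d$ is a unitary divisor of $|f(m)|$ with $|f(m)/d|=p$ prime) to force some factor to have $|f_i(m)|\le d$. Your explicit dichotomy on which factor absorbs $p$ is just the $\nu_m=1$ instance of the paper's counting of prime factors coprime to $d$.
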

Let $f^{(0)}(x)=f(x)$, and for each natural number $i$, $f^{(i)}(x)$ denote the $i$th derivative of $f$ with respect to $x$.
Recently, in \cite{J-S-2}, the authors obtained the following generalization of Theorem \ref{G} by requiring $f$ to take a prime power value multiplied with an integer at a sufficiently large integer argument.
\begin{thmx}[\cite{J-S-2}]\label{J-S-2b}
Let $f=a_0+a_1 x+\cdots+a_nx^n\in \Bbb{Z}[x]$ be a primitive polynomial  of degree $n$. Suppose there exist natural numbers $m$, $d$, $k$, $j\leq n$, and a prime $p\nmid d$ such that $m\geq H_f+d+1$, $f(m)=\pm p^k d$, $\gcd(k,j)=1$, $p^k$ divides ${f^{(i)}(m)}/{i!}$ for each index $i=0,\ldots,j-1$, and for $k>1$, also $p$ does not divide ${f^{(j)}(m)}/{j!}$. Then the polynomial $f$ is irreducible in $\Bbb{Z}[x]$.
\end{thmx}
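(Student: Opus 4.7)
The plan is to argue by contradiction. I will suppose $f = gh$ with $g, h \in \mathbb{Z}[x]$ both nonconstant, of degrees $s \geq 1$ and $t = n - s \geq 1$. The proof will combine an Archimedean lower bound on $|g(m)|$ and $|h(m)|$ coming from the location of the roots of $f$ with a $p$-adic rigidity statement coming from the Newton polygon of $f(m+y)$; the hypothesis $m \geq H_f + d + 1$ is what ties the two sides together.

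For the Archimedean bound I would invoke a Cauchy-type estimate: every complex root $\alpha$ of $f$ satisfies $|\alpha| < H_f + 1$, so since $m \geq H_f + d + 1$ one has $|m - \alpha| > d$. Multiplying over the roots of $g$ and $h$ yields $|g(m)| > d^{s} \geq d$ and $|h(m)| > d^{t} \geq d$.

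For the $p$-adic side I would analyze the Newton polygon of $F(y) := f(m+y) = \sum_{i=0}^{n} c_i y^i$, where $c_i = f^{(i)}(m)/i! \in \mathbb{Z}$. The hypotheses translate into $v_p(c_0) = k$, $v_p(c_i) \geq k$ for $0 \leq i < j$, and (when $k > 1$) $v_p(c_j) = 0$, so the polygon has an initial segment from $(0,k)$ to $(j,0)$ of slope $-k/j$, and $\gcd(k,j) = 1$ rules out any interior lattice point on it. Setting $G(y) = g(m+y)$ and $H(y) = h(m+y)$, so that $F = GH$ in $\mathbb{Z}_p[y]$, the standard Newton-polygon correspondence under multiplication forces the $j$ roots of $F$ of valuation $k/j$ to lie entirely in a single factor, because a segment of slope $-k/j$ in either factor has horizontal length a multiple of $j$. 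After relabelling I may assume all of them lie in $G$, so $s \geq j$. The leftmost segment of the Newton polygon of $G$ itself then has slope $-k/j$ and length $j$, running from $(0, v_p(g(m)))$ to $(j, v_p(g(m)) - k)$. Since this right endpoint is a vertex of the polygon it must coincide with a data point $(j, v_p(\alpha_j))$, and $\alpha_j \in \mathbb{Z}$ gives $v_p(\alpha_j) \geq 0$; hence $v_p(g(m)) \geq k$, and combined with $v_p(g(m)) + v_p(h(m)) = k$ and nonnegativity this pins the valuations down exactly: $v_p(g(m)) = k$ and $v_p(h(m)) = 0$.

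To close the argument, $p \nmid h(m)$ forces $h(m) \mid d$, so $|h(m)| \leq d$, in direct contradiction with $|h(m)| > d$ from the root bound. The case $k = 1$ bypasses the Newton polygon step entirely: $v_p(g(m)) + v_p(h(m)) = 1$ with nonnegative summands already makes one of the two valuations (say $v_p(h(m))$) vanish, and the same closing contradiction applies. I expect the main technical hurdle to be the Newton-polygon splitting step for $k > 1$, namely verifying rigorously that a slope with coprime numerator and denominator cannot be subdivided among the factors and that the endpoint $(j, v_p(g(m)) - k)$ really is a data point, so that the integrality bound $v_p(\alpha_j) \geq 0$ can be invoked to force the clean valuation identity $v_p(g(m)) = k$.
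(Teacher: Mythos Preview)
The paper does not actually prove Theorem~C; it is quoted from \cite{J-S-2} as background. The closest the paper comes is Theorem~2, whose proof combines the root--location bound (exactly your Archimedean step) with the Leibniz identity
\[
\frac{f^{(j)}(m)}{j!}=\sum_{i_1+\cdots+i_r=j}\frac{f_1^{(i_1)}(m)}{i_1!}\cdots\frac{f_r^{(i_r)}(m)}{i_r!}
\]
and a pigeonhole argument to conclude only that $f$ has at most $\min\{k,j\}$ irreducible factors. That bound does not by itself yield irreducibility when $k,j>1$, so the paper's own techniques do not recover the full strength of Theorem~C; the $\gcd(k,j)=1$ hypothesis and the divisibility conditions on the lower Taylor coefficients are never exploited there.

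Your proof is correct and supplies precisely the missing ingredient. The key difference is that you pass to $F(y)=f(m+y)$ and read the hypotheses as information about the $p$-adic Newton polygon: the conditions $v_p(c_0)=k$, $v_p(c_i)\ge k$ for $i<j$, and $v_p(c_j)=0$ force a single edge from $(0,k)$ to $(j,0)$, and $\gcd(k,j)=1$ makes it indivisible under the Dumas decomposition of $NP(F)=NP(G)+NP(H)$. Your deduction that $v_p(g(m))=k$ and $v_p(h(m))=0$ from the integrality of the vertex $(j,v_p(g(m))-k)$ is sound, and the closing contradiction $|h(m)|\le d<|h(m)|$ is exactly the mechanism the paper uses in its proofs of Theorems~1 and~2. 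The separate treatment of $k=1$ is also correct and matches Girstmair's Theorem~B. In short: same Archimedean half as the paper, but a sharper $p$-adic half via Newton polygons that the paper's Leibniz-rule approach does not reach.
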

Note that Theorem \ref{G} is precisely the case $k=1$ of Theorem \ref{J-S-2b}.

For a positive divisor $d$ of a natural number $a$, if $d$ and $a/d$ are coprime, we say that $d$ is a unitary divisor of $a$. We shall write $d\| a$ to mean that $d$ is a unitary divisor of $a$. Now a closer look at the aforementioned three  irreducibility criteria reveals that in each case for a sufficiently large integer $m$, the number $|f(m)|$ is assumed to be a product of a pair of its unitary divisors. In fact this simple common observation about Theorems \ref{M}-\ref{J-S-2b} leads to the following result.
\begin{theorem}\label{th1}
Let $f=a_0+a_1x+\cdots +a_nx^n\in \mathbb Z[x]$ be a polynomial of degree $n$. Suppose there exist positive integers $m$ and $d$ such that $m\geq H_f+d+1$, $d\|f(m)$, and $|f(m)/d|>1$. Let $\nu_m$ denote the number of prime factors of $|f(m)/d|$ counted with multiplicities. Then the polynomial $f$ is a product of at most $\nu_m$ irreducible factors in $\mathbb Z[x]$. In particular, if $\nu_m=1$ for such an $m$, then $f$ is irreducible in $\mathbb Z[x]$.
\end{theorem}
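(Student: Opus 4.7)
My plan is to combine a Cauchy-type bound on the complex zeros of $f$ with the unitary-divisor hypothesis $d\|f(m)$, so that each non-constant irreducible factor of $f$ is forced to contribute at least one prime (counted with multiplicity) to $|f(m)/d|$.

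First, I would establish the classical Cauchy root bound: every complex zero $\alpha$ of $f$ satisfies $|\alpha|<H_f+1$. The derivation is standard---the identity $a_n\alpha^n=-\sum_{i<n}a_i\alpha^i$ combined with a geometric-series estimate forces $|\alpha|-1<H_f$ as soon as $|\alpha|>1$. Together with the hypothesis $m\geq H_f+d+1$ this immediately yields
\[
|m-\alpha|\ \geq\ m-|\alpha|\ >\ d
\]
for every complex zero $\alpha$ of $f$.

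Next, factor $f=f_1\cdots f_r$ into non-constant irreducible polynomials in $\mathbb{Z}[x]$. Each $f_i$ has integer leading coefficient of absolute value at least $1$ and its complex zeros are among those of $f$, so the product formula gives
\[
|f_i(m)|\ \geq\ \prod_{f_i(\alpha)=0}|m-\alpha|\ >\ d^{\deg f_i}\ \geq\ d.
\]
For each $i$ I would then introduce the decomposition $|f_i(m)|=\delta_i D_i$, in which $\delta_i$ collects the contribution of the primes dividing $d$ and $D_i$ is coprime to $d$. Taking $p$-adic valuations in $f(m)=\prod_i f_i(m)$ at every prime $p\mid d$ and invoking $d\|f(m)$ yields the exact identities $\prod_i\delta_i=d$ and $\prod_i D_i=|f(m)/d|$. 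Since $\delta_i\mid d$, the previous inequality forces $D_i=|f_i(m)|/\delta_i>d/\delta_i\geq 1$, so $D_i\geq 2$ and each $D_i$ contributes at least one prime (with multiplicity) to $|f(m)/d|$. Summing gives $\nu_m\geq r$, as required.

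The main obstacle, as I see it, is the bookkeeping in the unitary decomposition: one must use that $d$ is an \emph{exact} unitary divisor of $f(m)$ and not merely a divisor, because this is what upgrades the trivial divisibility $\prod_i\delta_i\mid d$ to the equality $\prod_i\delta_i=d$ and prevents any prime of $d$ from leaking into the $D_i$ and corrupting the count. Once this is cleanly set up, the root bound and the factor-wise value estimate are routine, and the counting step is then immediate; the special case $\nu_m=1$ yields $r\leq 1$ and hence the claimed irreducibility.
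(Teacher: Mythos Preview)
Your proposal is correct and follows essentially the same route as the paper: the Cauchy bound $|\alpha|<H_f+1$, the factor-wise estimate $|f_i(m)|>d^{\deg f_i}\geq d$, and then the unitary-divisor condition $d\|f(m)$ to force each $f_i(m)$ to carry a prime outside $d$. Your explicit $\delta_i/D_i$ bookkeeping just spells out in detail what the paper asserts in one sentence (``there exists a prime factor of $|f_i(m)|$ coprime to $d$ for each $i$''); the two arguments are otherwise identical.
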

Of course only those $m$'s would be relevant for a given polynomial in Theorem \ref{th1} for which $\nu_m$ does not exceed the degree of the underlying polynomial.
\begin{remark}
We mention here that the upper bound on the number of irreducible factors of $f$ in Theorem \ref{th1} is best possible in the sense that there exist examples of polynomials for which these bounds are attained. The simplest example of such a nonconstant polynomial is the polynomial $x^n$ which takes the value $p^n$ having $n$ factors at any prime number $p\geq 3$. To see more interesting examples, let us consider the polynomial
\begin{eqnarray*}
f_n(x)&=& x^{2n}-x^2-2x-1,~n\geq 3
\end{eqnarray*}
for which $H_f=2$. For $n=2$, we choose $d=1$ and $m=3\geq (2+d)$, and we find that $f_2(3)= 65=5\times 13$, a product of two distinct primes. This in view of Theorem \ref{th1} tells us that $\nu_{3}=2$. On the other hand, we have
%\begin{eqnarray*}
$f_2(x)= x^{4}-(x+1)^2=(x^2-x-1)(x^2+x+1)$,
%\end{eqnarray*}
a product of two irreducible polynomials. So, $f_2$ is a product of exactly $2=\nu_{3}$ irreducible factors in $\mathbb{Z}[x]$. A similar observation follows for few other values of $n$ as given in Table \ref{table1}.
\begin{table}[h!!!]
    \caption{Factorization of $f_n$ for some values of $n$}\label{table1}
    \begin{tabular}{llllcc}
      \hline
      % after \\: \hline or \cline{col1-col2} \cline{col3-col4} ...
      $n$ & $m$ & $d$ & $f_n(m)/d$ & $\nu_m$ & No. of Irreducible\\ &&&(Prime factorization)&&factors of $f_n(x)$ \\
      \hline
      3 & 3 & 1 & $23\times 31$ & 2  & $2$\\
      4 & 5 & 1 & $619\times 631$ & 2  & $2$\\
      5 & 3 & 1 & $13\times 19\times 239$ & $3$  & $3$\\
      6 & 6 & 1 & $46649\times 46663$ & $2$  &$2$\\
      7 & 21 & 1 & $1801088519\times 1801088563$ & 2  &$2$\\
      8 & 50 & 11 & $2551\times 1392056591\times 39062499999949$ & 3  & $3$\\
      9 & 9 & 1 & $387420479\times 387420499$ & 2  & $2$\\
      10 & 7 & 3 & $94158419\times 282475241$ & 2  & $2$\\
      \hline
    \end{tabular}
    \end{table}

Recall that the reciprocal polynomial $\tilde{f}\in \mathbb{Z}[x]$ of $f\in \mathbb{Z}[x]$ is defined as
%\begin{eqnarray*}
$\tilde{f}(x)=x^{\deg f}f(1/x)$.
%\end{eqnarray*}
Since each of $f$ and $\tilde{f}$ have same number of irreducible factors, it is more effective to simultaneously compute factorization of $f(m)$ and $\tilde{f}(m)$ and choose $m$ at which one of  $f$ and $\tilde{f}$ have smaller number of prime factors. For example, the smallest value of $m$ for which $f_8(m)$ has 3 prime factors is 50 with $d=11$, whereas a much smaller value $m=3$ is required for the corresponding reciprocal polynomial $\tilde{f_8}$ to attain three prime factors, since
\begin{eqnarray*}
\tilde{f_8}(3)=76527503=13\times 673\times8747.
\end{eqnarray*}
\end{remark}
\begin{remark}
For a positive integer $a$, we consider the polynomial
\begin{eqnarray*}
f_a=x^4+(4a+2)x^2+1.
\end{eqnarray*}
If for a combination of $a$ and $m$ with $m\geq H_{f_a}+2$, we have $\nu_m=2$ for $f_a$, then by Theorem  \ref{th1}, the polynomial $f_a$ is a product of at most two irreducible polynomials in $\mathbb Z[x]$. We show that $f_a$ is irreducible. If possible, let $f_a(x)=g(x)h(x)$ for nonconstant polynomials $g$ and $h$ in $\mathbb Z[x]$. Clearly, $f_a$ has no rational root, and so, $f$ has no linear factor. It follows that each of $g$ and $h$ is a monic polynomial of degree $2$. So, we must have $g=\pm1+a_1 x+x^2$ and $h=\pm 1+b_1x+x^2$ for some $a_1,b_1\in \mathbb Z$. We then have
\begin{eqnarray*}
x^4+(4a+2)x^2+1 &=&  (\pm1+a_1 x+x^2)(\pm 1+b_1x+x^2)\\&=&1+(a_1+b_1)x+(2+a_1b_1)x^2+(a_1+b_1)x^3+x^4,
\end{eqnarray*}
which tells us that $a_1=-b_1$ and $2+a_1b_1=4a+2$ or $-a_1^2=4a$, which is impossible. Thus, such an $f_a$ must be irreducible in $\mathbb Z[x]$.

In particular, for $a=2$, we take $m=12=H_f+2$ so that  $f_2(12)=22177=67\times331$, and so $\nu_m=2$ for $f_2$. By preceding paragraph we infer that the polynomial $f_2$ is irreducible in $\mathbb Z[x]$. On the other hand, to deduce the irreducibility of $f_2$ using Theorem \ref{G}, one requires to take at least $m=15$, where  $f_2(15)=4\times 13219$, and Theorem \ref{M} applies to $f_2$ for at least $m=18$ with the prime value $f(18)=108217$.  For more appealing example of this kind corresponds to the case $a=5$, where Theorem \ref{th1} applies for $m=H_{f_5}+2=24$, where $f_5(24)=7\times 49207$, whereas Theorem \ref{G} applies for at least $m=30$ and Theorem \ref{M} applies for $m=204$.
\end{remark}
Our second factorization result generalizes Theorem \ref{J-S-2b} and is stated as follows.
\begin{theorem}\label{th2}
Let $f=a_0+a_1x+\cdots +a_nx^n\in \mathbb Z[x]$ be a polynomial with $a_0a_n\neq 0$. Suppose there exist  natural numbers $m$, $d$, $k$, $j\leq n$, and a prime $p\nmid d$ such that $m\geq H_f+d+1$, and $|f(m)|/d=p^k$ is coprime to $f^{(j)}(m)/j!$. Then the polynomial $f$ is a product of at most $\min\{k,j\}$  irreducible factors in $\mathbb Z[x]$.  In particular,  if $k=1$, or $j=1$ for such an $m$, then $f$ is irreducible.
\end{theorem}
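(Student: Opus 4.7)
The plan is to combine the bound supplied by Theorem \ref{th1} with a Leibniz-type derivative computation. Since the hypothesis gives $|f(m)|/d = p^k$, the quantity $\nu_m$ in Theorem \ref{th1} equals $k$, so that theorem already forces $f$ to have at most $k$ irreducible factors in $\mathbb{Z}[x]$. The new content of Theorem \ref{th2} is therefore the bound $j$; combined with the $k$ bound it produces $\min(k,j)$.

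Suppose $f = g_1 g_2 \cdots g_r$ is a factorization into non-constant irreducibles in $\mathbb{Z}[x]$. The first step is to verify that $p \mid g_\ell(m)$ for every $\ell$. By the Cauchy root bound, every complex root $\alpha$ of $f$ satisfies $|\alpha| < 1 + H_f$, and since the roots of each $g_\ell$ are among those of $f$, the hypothesis $m \geq H_f + d + 1$ yields $|m - \alpha| > d$ and hence $|g_\ell(m)| > d$. Now $g_\ell(m)$ divides $f(m) = \pm d p^k$, and since $\gcd(d, p^k) = 1$ we may write $|g_\ell(m)| = d_\ell\, p^{k_\ell}$ with $d_\ell \mid d$ and $k_\ell \geq 0$. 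The strict inequality $|g_\ell(m)| > d \geq d_\ell$ forces $p^{k_\ell} > 1$, i.e.\ $k_\ell \geq 1$, which is precisely $p \mid g_\ell(m)$.

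Next I would expand $f^{(j)}(m)/j!$ by applying Leibniz to the factorization $f = \prod_\ell g_\ell$:
\[
\frac{f^{(j)}(m)}{j!} \;=\; \sum_{\substack{i_1 + \cdots + i_r = j \\ i_\ell \geq 0}} \prod_{\ell=1}^{r} \frac{g_\ell^{(i_\ell)}(m)}{i_\ell!}.
\]
Each inner factor is an integer, because $g_\ell \in \mathbb{Z}[x]$ forces $g_\ell(m+y) \in \mathbb{Z}[y]$. If $r > j$, then in every multi-index $(i_1,\ldots,i_r)$ satisfying $i_1 + \cdots + i_r = j$, at least $r - j \geq 1$ of the entries vanish, and every vanishing index $\ell$ contributes a factor $g_\ell(m)$, which is divisible by $p$. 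Hence every summand on the right is divisible by $p$, so $p \mid f^{(j)}(m)/j!$, contradicting the coprimality with $p^k$. This forces $r \leq j$, and together with the bound from Theorem \ref{th1} gives $r \leq \min(k,j)$.

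The main obstacle I anticipate is the Leibniz bookkeeping: one must combine a pigeonhole observation (that $\sum i_\ell = j < r$ always leaves at least one zero entry) with the integrality of the Taylor coefficients $g_\ell^{(i_\ell)}(m)/i_\ell!$, since only then does $p$-divisibility of each summand lift to $p$-divisibility of the whole sum. The derivative-based bound $j$ is the genuinely new ingredient of Theorem \ref{th2}; the bound $k$ is merely inherited from Theorem \ref{th1}.
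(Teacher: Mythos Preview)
Your proposal is correct and follows essentially the same route as the paper: invoke Theorem~\ref{th1} for the bound $r\le k$, use the root-location estimate to show $|g_\ell(m)|>d$ and hence $p\mid g_\ell(m)$ for every $\ell$, and then apply the generalized Leibniz rule together with pigeonhole to get $r\le j$. Your write-up is in fact slightly more explicit than the paper's on two points (the decomposition $|g_\ell(m)|=d_\ell p^{k_\ell}$ with $d_\ell\mid d$, and the integrality of the Taylor coefficients $g_\ell^{(i_\ell)}(m)/i_\ell!$), but the argument is the same.
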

Here too, the upper bound as mentioned in Theorem \ref{th2} is best possible. To see this, first we observe inductively that for any positive integer $n$, the maximum value of the binomial coefficient $n\choose r$ for $r=0,\ldots,n$ is one of ${n\choose \lceil n/2\rceil}$ and ${n\choose \lfloor n/2\rfloor}$. Further, using induction on $n$ again, we arrive at the following two inequalities.
\begin{eqnarray*}
{n\choose \lceil n/2\rceil}\leq 2^{n-1};~{n\choose \lfloor n/2\rfloor}\leq 2^{n-1}.
\end{eqnarray*}
Using these facts, we observe that the polynomial
\begin{eqnarray*}
f &=&  (x-1)^n,~n\geq 2,
\end{eqnarray*}
satisfies the hypothesis of Theorem \ref{th2} for $d=1$, $k=n^2$, $j=n$,
$m=2^n+1>2+2^{n-1}\geq H_f+d+1$, since here,
either $H_f={n\choose \lceil n/2\rceil}$  or $H_f={n\choose \lfloor n/2\rfloor}$ and $|f(m)|/d=2^{n^2}$ which is coprime to $1=f^{(n)}(m)/n!$. So, $f$ is a product of at most $\min\{n^2,n\}=n$ irreducible factors in $\mathbb{Z}[x]$. We note that the polynomial $(x-1)^n$ is a product of exactly $n$ irreducible factors in $\mathbb{Z}[x]$.

As an extension of the irreducibility criterion of A. Cohn, we have the following factorization result whose ideas of the proof stem from the paper \cite{Mu}.
\begin{theorem}\label{th3}
    Let $b\geq 2$, $N\geq 2$ be positive integers. Let $N$ has the  $b$-adic expansion
    \begin{eqnarray*}
      N &=& a_0+a_1 b+a_2 b^2+\cdots+a_n b^n.
    \end{eqnarray*}
Let $\nu_N$ denote the number of prime divisors of $N$ counted with multiplicities. Then the polynomial $f=a_0+a_1 x+\cdots+a_n x^n$ is a product of at most $\nu_N$ irreducible factors in $\mathbb{Z}[x]$. In particular, if $\nu_N=1$, then $f$ is irreducible.
\end{theorem}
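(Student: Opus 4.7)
The plan is to adapt the root-location argument by which Murty~\cite{Mu} proved Cohn's criterion. Write $f$ in $\mathbb Z[x]$ as a product of irreducibles,
\[
f \,=\, \varepsilon\cdot c\cdot g_1\cdots g_r,
\]
where $\varepsilon\in\{\pm 1\}$, $c\geq 1$ is the content of $f$, and each $g_j$ is a primitive polynomial of positive degree irreducible in $\mathbb Q[x]$. Denote by $\Omega(\cdot)$ the number of prime factors of a positive integer counted with multiplicity; the number of irreducible factors of $f$ counted with multiplicity is then $\Omega(c)+r$. Evaluating at $x=b$ gives $N=\varepsilon\,c\,g_1(b)\cdots g_r(b)$, so $\nu_N=\Omega(c)+\sum_{j=1}^{r}\Omega(|g_j(b)|)$. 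To conclude that $\Omega(c)+r\leq\nu_N$, it therefore suffices to show $|g_j(b)|\geq 2$ for each $j$.

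Over $\mathbb C$ each $g_j$ factors as $g_j(x)=\ell_j\prod_i(x-\alpha_i)$, with leading coefficient $\ell_j\geq 1$ and with roots $\alpha_i$ that are also roots of $f$. Note that $g_j(b)\neq 0$: otherwise $g_j=\pm(x-b)$ and then $f(b)=0$, contradicting $N\geq 2$. Hence $|g_j(b)|$ is a positive integer, and the identity $|g_j(b)|=\ell_j\prod_i|b-\alpha_i|$, together with $\deg g_j\geq 1$, will force $|g_j(b)|\geq 2$ once we prove the key geometric lemma: \emph{under the hypotheses $0\leq a_i\leq b-1$ and $a_n\geq 1$, every complex root $\alpha$ of $f$ satisfies $|b-\alpha|>1$.}

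To establish this lemma I would argue by cases on $\alpha$. Since $a_i\geq 0$ and $a_n\geq 1$, one has $f(x)>0$ for all real $x\geq 0$, so $f$ has no non-negative real root; any real root $\alpha$ is therefore negative and satisfies $|b-\alpha|>b\geq 2$, while a complex root with $\mathrm{Re}(\alpha)\leq 0$ gives $|b-\alpha|^2\geq b^2\geq 4$. The subtle case is $\mathrm{Re}(\alpha)>0$, handled by splitting on $|\alpha|$. If $|\alpha|\leq 1$, the exclusion of the value $\alpha=1$ forces $|b-\alpha|>b-1\geq 1$. If $|\alpha|>1$, dividing $f(\alpha)=0$ by $\alpha^{n-1}$ yields
\[
|a_n\alpha+a_{n-1}|\,\leq\,\sum_{i=0}^{n-2}a_i|\alpha|^{\,i-n+1}\,<\,\frac{b-1}{|\alpha|-1},
\]
and combining with $|a_n\alpha+a_{n-1}|\geq a_n|\alpha|\geq|\alpha|$ (valid because $a_{n-1}\geq 0$ and $\mathrm{Re}(\alpha)>0$) produces $|\alpha|\leq (1+\sqrt{4b-3})/2$. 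Expanding $|b-\alpha|^2=(b-\mathrm{Re}(\alpha))^2+\mathrm{Im}(\alpha)^2$ and using this bound gives $|b-\alpha|>1$ at once when $b\geq 3$. The hard part is $b=2$, where the bound $|\alpha|\leq (1+\sqrt 5)/2$ is too close to $b$ to exclude $|b-\alpha|\leq 1$ by modulus alone; there one must extract a finer constraint from $f(\alpha)=0$, e.g.\ the three-term estimate $|\alpha^2+a_{n-1}\alpha+a_{n-2}|<1/(|\alpha|-1)$, whose imaginary part $|\mathrm{Im}(\alpha)|\cdot(2\mathrm{Re}(\alpha)+a_{n-1})$ is bounded away from zero by the constraint $\mathrm{Re}(\alpha)\geq 1$ forced by $|b-\alpha|\leq 1$, and combine these to reach a contradiction. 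Once the key lemma is in place, $|g_j(b)|\geq 2$ for every $j$, and the bound $\Omega(c)+r\leq\nu_N$ follows; the special case $\nu_N=1$ collapses to $r+\Omega(c)\leq 1$, forcing $f$ to be irreducible in $\mathbb Z[x]$.
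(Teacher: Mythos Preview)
Your overall plan matches the paper's: factor $f$ and show each nonconstant irreducible factor satisfies $|g_j(b)|\geq 2$ by locating the roots of $f$. For $b\geq 3$ your bound $|\alpha|<\tfrac12(1+\sqrt{4b-3})$ on roots with positive real part is exactly Murty's first lemma with $H_f=b-1$, and $|b-\alpha|>1$ then follows just as in the paper; your separation of the content $c$ is in fact tidier than the paper's treatment.

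The genuine gap is at $b=2$. You attempt to prove the pointwise inequality $|2-\alpha|>1$ directly, via the three-term estimate, asserting that its imaginary part $|\mathrm{Im}(\alpha)|\,(2\,\mathrm{Re}(\alpha)+a_{n-1})$ is ``bounded away from zero'' once $\mathrm{Re}(\alpha)\geq 1$. But that constraint only controls the factor $2\,\mathrm{Re}(\alpha)+a_{n-1}$; nothing you have written bounds $|\mathrm{Im}(\alpha)|$ from below, so the promised contradiction is not established. The paper sidesteps this entirely and never proves $|2-\alpha|>1$. Instead it invokes Murty's second lemma, that every root of a $\{0,1\}$-polynomial has $\mathrm{Re}(\alpha)<3/2$, and argues as follows: the shifted polynomial $g_j(x+\tfrac32)=\ell_j\prod_\alpha(x+\tfrac32-\alpha)$ has all coefficients of one sign (pair complex-conjugate roots and use $\mathrm{Re}(\tfrac32-\alpha)>0$), whence $|g_j(2)|=|g_j(\tfrac12+\tfrac32)|>|g_j(-\tfrac12+\tfrac32)|=|g_j(1)|\geq 1$, the last inequality holding because an irreducible $g_j$ of positive degree cannot vanish at~$1$. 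This shift-and-compare trick is the idea missing from your $b=2$ case.
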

As before, the upper bound mentioned in Theorem \ref{th3} is best possible as follows from the following explicit example.
For a prime number  $q$ with
\begin{eqnarray*}
q>1+ \max\Bigl\{{n\choose \lceil n/2\rceil},{n\choose \lfloor n/2\rfloor}\Bigr\},
\end{eqnarray*}
the number $q^n$ has the $(q-1)$-adic expansion
\begin{eqnarray*}
q^n &=& {n\choose 0}+{n\choose 1}(q-1)+\cdots+{n\choose n}(q-1)^n.
\end{eqnarray*}
Since $q^n$ has $n$ prime factors counted with multiplicities, by Theorem \ref{th3}, the polynomial
${n\choose 0}+{n\choose 1}x+\cdots+{n\choose n}x^n$
is a product of at most $\nu_{q^n}=n$ irreducible factors in $\mathbb{Z}[x]$. Using binomial theorem, we see that ${n\choose 0}+{n\choose 1}x+\cdots+{n\choose n}x^n =(x+1)^n$, which is  a product of exactly $n$ irreducible factors in $\mathbb{Z}[x]$.

We may have the following extension of Theorem \ref{M} for bivariate polynomials.
\begin{theorem}\label{th4}
 Let $\K$ be a field, and let $f=a_0(x)+a_1(x)y+\cdots+a_n(x)y^n\in \K[x,y]$ be a  polynomial with $a_0a_n\neq 0$, and for a real number $\rho>1$, define
 \begin{eqnarray*}
 H_f=\rho^{\max_{0\leq i\leq n-1}\deg a_i-\deg a_n}.
 \end{eqnarray*}
  Suppose there exists a polynomial $a\in \K[x]$ for which
 \begin{eqnarray*}
              \deg a\geq  \frac{\log(H_f+2)}{\log \rho},
 \end{eqnarray*}
  and $\nu_a$ is the number of irreducible factors of the polynomial $f(x,a(x))$ in $\K[x]$ counted with multiplicities. Then  the polynomial $f(x,y)$ is a product of at most $\nu_a$ irreducible factors in $\K[x,y]$. In particular, if $\nu_a=1$, then $f$ is irreducible in $\K[x,y]$.
\end{theorem}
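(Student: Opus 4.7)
The plan is to run the non-Archimedean analogue of the proof of Theorem~\ref{th1}, using the degree valuation on $\K(x)$ in place of the usual absolute value on $\mathbb Z$. Put $|p/q|=\rho^{\deg p-\deg q}$ on $\K(x)$; this is a non-Archimedean absolute value. Let $L$ be a splitting field of $f\in\K(x)[y]$ over $\K(x)$ and extend $|\cdot|$ to $L$ (such an extension exists since any absolute value extends to an algebraic extension).

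The first ingredient is a root bound: if $\alpha\in L$ is a root of $f$, then from $a_n\alpha^n=-\sum_{i<n}a_i\alpha^i$ and the ultrametric inequality one has $|a_n||\alpha|^n\leq\max_{i<n}|a_i||\alpha|^i$; assuming $|\alpha|\geq 1$ and dividing by $|\alpha|^{n-1}$ gives $|\alpha|\leq\max_{i<n}|a_i|/|a_n|=H_f$. Consequently $|\alpha|\leq\max(1,H_f)$. The hypothesis $\deg a\geq\log(H_f+2)/\log\rho$ now yields $|a(x)|=\rho^{\deg a}\geq H_f+2>\max(1,H_f)\geq|\alpha|$, and the ultrametric inequality upgrades this strict inequality to the equality $|a(x)-\alpha|=|a(x)|=\rho^{\deg a}$ for every root $\alpha$ of $f$.

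Now assume $f=f_1\cdots f_r$ is the factorization into irreducibles in $\K[x,y]$. For each $f_j$ with $\deg_y f_j\geq 1$, write $f_j=c_j(x)\prod_k(y-\beta_{j,k})$ over $L$, where $c_j\in\K[x]$ is its leading coefficient in $y$ and the $\beta_{j,k}$ are among the roots of $f$. Multiplying the equalities from the previous step yields
\[
|f_j(x,a(x))|=|c_j|\cdot\rho^{(\deg_y f_j)\,\deg a}\geq\rho^{\deg a}>1,
\]
so $f_j(x,a(x))$ is a nonconstant (in particular nonzero) polynomial in $\K[x]$ and therefore has at least one irreducible factor. If instead $f_j\in\K[x]$, then $f_j$ is irreducible in $\K[x]$ and $f_j(x,a(x))=f_j(x)$ is itself an irreducible element of $\K[x]$. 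In either case $f_j(x,a(x))$ contributes at least one irreducible factor to $f(x,a(x))=\prod_j f_j(x,a(x))$ in $\K[x]$, so $r\leq\nu_a$ as required.

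The main obstacle is simply identifying the right non-Archimedean dictionary: the ultrametric upgrade $|a-\alpha|=|a|$ whenever $|a|>|\alpha|$ cleanly replaces the delicate Archimedean lower bound on $|g(m)|$ used in the integer setting, and the classical fact that any absolute value on $\K(x)$ extends to a splitting field frees us from having to make a canonical choice of such an extension.
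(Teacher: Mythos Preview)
Your proof is correct and follows essentially the same strategy as the paper's: extend the degree absolute value from $\K(x)$ to an algebraic closure (you use a splitting field, the paper uses $\overline{\K(x)}$), bound the roots of $f$, and conclude that each irreducible factor of $f$ specializes at $y=a(x)$ to a polynomial of positive degree in $\K[x]$. Your execution is actually a bit cleaner than the paper's---you exploit the ultrametric equality $|a(x)-\alpha|=|a(x)|$ directly, whereas the paper mimics the Archimedean geometric-series estimate to get $\|\theta\|<H_f+1$ and then $\|a(x)-\theta\|\geq\|a(x)\|-\|\theta\|>1$; you also explicitly handle the factors with $\deg_y f_j=0$, which the paper leaves implicit.
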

The upper bound in Theorem \ref{th4} on irreducible factors of $f(x,y)$ is attained by the polynomial $f(x,y)=(x+y)^n\in \mathbb{Q}[x,y]$, since, here $H_f=\rho^n$, and if we take $\rho=2$ and $a(x)=2+x^{n+1}-x$, we observe that $\deg a(x)=n+1\geq \log(2+2^n)/\log(2)$, and $f(x,a(x))=(2+x^{n+1})^n$, where we note that the polynomial $2+x^{n+1}$ is Eisensteinian with respect to the prime $2$. It follows that $\nu_a=n$. The polynomial $x+y$ is irreducible in $\mathbb{Q}[x,y]$, since $x+y$ is Eisensteinian with respect to the irreducible polynomial $x$, so the polynomial $(x+y)^n$ is a product of exactly $n$ irreducible factors in $\mathbb{Q}[x,y]$.

 The authors in \cite{JRG2024}  proved the following generalization of the well known Weintraub's irreducibility result \cite{W} for univariate polynomials.
\begin{thmx}[\cite{JRG2024}]\label{thD}
        Let $f=a_0+ a_{1}x+\cdots+a_n x^n\in \Bbb{Z}[x]$ be a polynomial and suppose there exists a prime  $p$ and   positive integers $k$ and $j$ with $1\leq j\leq n$ such that  $p$ does not divide $a_j$, $p^k$ divides $a_i$ for $i=0,\ldots,j-1$. If there exists a least integer $\ell$ with $0\leq \ell\leq j-1$ for which $p^{k+1}$ does not divide $a_\ell$ and $\gcd(j-\ell,k)=1$, then any factorization $f(x)=g(x)h(x)$ of $f$ in $\mathbb{Z}[x]$ has a factor of degree at most $n-j+\ell$.
\end{thmx}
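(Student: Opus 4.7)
My plan is to derive Theorem~\ref{thD} from the Newton polygon of $f$ at the prime $p$ combined with Dumas's theorem on the Newton polygon of a product. First, I would associate with $f$ its Newton polygon $N(f)$, the lower convex hull in $\mathbb{R}^2$ of the data points $\{(i, v_p(a_i)) : 0 \le i \le n,\, a_i \ne 0\}$. The hypotheses translate into concrete features: $(j, 0)$ is the minimum vertex of $N(f)$ since $v_p(a_j) = 0$; the data point $(\ell, k)$ sits at height exactly $k$ by the minimality of $\ell$ combined with $p^k \mid a_\ell$; and $v_p(a_i) \ge k$ for $\ell \le i < j$ while $v_p(a_i) \ge k+1$ for $0 \le i < \ell$. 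A short convexity argument shows that the terminating edge of $N(f)$ (the edge ending at $(j, 0)$) must start at some $(i^*, v_p(a_{i^*}))$ with $i^* \le \ell$: otherwise, starting at $v_1 > \ell$ with $v_p(a_{v_1}) \ge k$, the non-increasing polygon on $[\ell, v_1]$ combined with $v_p(a_\ell) = k$ would force the polygon to be constant $k$ there, yet the strictly negative slope of the terminating edge would then violate convexity at $v_1$. In the principal case $i^* = \ell$, the terminating edge has slope $-k/(j-\ell)$ and horizontal length $j-\ell$, and the coprimality $\gcd(k, j-\ell) = 1$ renders this edge primitive (no interior lattice points).

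Next, suppose $f = g h$ is any factorization in $\mathbb{Z}[x]$. Dumas's theorem yields $N(f) = N(g) \oplus N(h)$: for each slope $s$, the horizontal lengths of slope-$s$ edges in $N(g)$ and $N(h)$ sum to that of $N(f)$, and each such length is a non-negative integer multiple of the denominator of $s$ written in lowest terms. Applied to the primitive slope $-k/(j-\ell)$ of length $j-\ell$: the only admissible split into non-negative multiples of $j-\ell$ summing to $j-\ell$ is $\{L_g, L_h\} = \{0, j-\ell\}$, so the entire edge is absorbed by (say) $N(h)$. Consequently $h$ has at least $j-\ell$ roots in $\overline{\mathbb{Q}_p}$ of valuation $k/(j-\ell)$, forcing $\deg h \ge j-\ell$, and hence $\deg g = n - \deg h \le n - (j-\ell) = n - j + \ell$, as required.

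The main obstacle I anticipate is the off-generic configuration where the terminating edge actually starts at some $i^* < \ell$, so that the data point $(\ell, k)$ is bypassed by a less-steep edge; then the slope $-k/(j-\ell)$ need not appear in $N(f)$ at all, and Dumas's theorem applied to the actually-present slope---whose denominator may be coarser than $j-\ell$---does not immediately deliver the degree bound. Resolving this case will require exploiting the strict divisibility $p^{k+1} \mid a_i$ for $i < \ell$ together with $\gcd(k, j-\ell) = 1$ more incisively, perhaps by constraining the Newton polygons of $g$ and $h$ to rule out Dumas-consistent splittings that are nonetheless incompatible with $g, h \in \mathbb{Z}[x]$, or by supplementing with a direct coefficient-level computation in the spirit of Weintraub.
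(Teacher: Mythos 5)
You have correctly identified the one place where your argument breaks, and unfortunately that break is genuine, not an off-generic technicality. First, for context: the paper does not prove Theorem \ref{thD} at all (it is imported from \cite{JRG2024}), but your Newton-polygon/Dumas strategy closely parallels the paper's proof of the bivariate analogue, Theorem \ref{th5}, and your treatment of the principal case $i^*=\ell$ is correct. The trouble is that the exceptional case $i^*<\ell$ really occurs and, when it does, Dumas's theorem applied to $N(f)$ provably cannot deliver the conclusion by itself. Concretely, take $p=2$, $k=3$, $j=n=6$, with $v_2(a_0)=v_2(a_1)=4$, $v_2(a_2)=3$, $v_2(a_3),v_2(a_4),v_2(a_5)\geq 3$, $v_2(a_6)=0$. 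Then $\ell=2$, $\gcd(j-\ell,k)=\gcd(4,3)=1$, and the theorem promises a factor of degree at most $2$. Yet $N(f)$ is the single segment from $(0,4)$ to $(6,0)$ of slope $-2/3$: the point $(\ell,k)=(2,3)$ lies strictly above it, and the segment contains the interior lattice point $(3,2)$. Dumas therefore permits a splitting into two length-$3$ edges, i.e.\ a factorization into two cubics, which the theorem forbids. So the conclusion is not a formal consequence of the shape of $N(f)$, and no refinement of the convexity analysis alone can rescue the argument. (Note that Theorem \ref{th5} avoids this entirely because its hypothesis $\deg a_\ell/(j-\ell)=\max_i\{\deg a_i/(j-i)\}$ is precisely the assertion that $(\ell,v(a_\ell))$ is an endpoint of an edge of the Newton polygon; the ``least $\ell$ with $p^{k+1}\nmid a_\ell$'' condition of Theorem \ref{thD} does not imply this.)

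Closing the gap requires exactly the Weintraub-style coefficient computation you mention but do not carry out, exploiting that $v_p(a_\ell)=k$ exactly while $v_p(a_i)\geq k+1$ for $i<\ell$ and $v_p(a_i)\geq k$ for $\ell<i<j$. In the example above it runs as follows: if $f=gh$ with both factors monic cubics, Dumas forces $v_2(b_0)=v_2(c_0)=2$, $v_2(b_1),v_2(c_1)\geq 2$, $v_2(b_2),v_2(c_2)\geq 1$; writing $b_0=4u$, $c_0=4v$ with $u,v$ odd and $b_2=2s$, $c_2=2t$, the condition $v_2(a_2)=3$ gives $v_2\bigl(8(ut+sv)\bigr)=3$, hence $s+t\equiv ut+sv\equiv 1\pmod 2$, while $v_2(a_5)=v_2(2(s+t))\geq 3$ gives $s+t\equiv 0\pmod 4$ --- a contradiction. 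A complete proof must run such an argument (or an equivalent valuation-theoretic one over $\mathbb{Q}_p$) uniformly in the case $i^*<\ell$, and until that is done the proof is incomplete.
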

Weintraub's irreducibility criterion is precisely the case $j=n$ of Theorem \ref{thD}.

We may have the following bivariate version of Theorem \ref{thD}, whose idea of the proof is based on Dumas Theorem \cite{Du} on Newton-polygons.
\begin{theorem}\label{th5} Let $\K$ be a field, and let
         $f=a_0(x)+ a_{1}(x)y+\cdots+a_n(x) y^n\in \K[x,y]$ be a polynomial with $a_0a_n\neq 0$. Suppose there exist an integer $j$ and a least integer $\ell$ satisfying $1\leq \ell+1\leq j\leq n$ such that
         \begin{eqnarray*}
         a_j\in \K^\times,~\frac{\deg a_\ell}{j-\ell}=\max_{0\leq i\leq j-1}\Bigl\{\frac{\deg a_i}{j-i}\Bigr\},~\gcd(j-\ell,\deg a_\ell)=1.
         \end{eqnarray*}
    Then any factorization $f(x,y)=g(x,y)h(x,y)$ of $f$ in $\K[x,y]$ has a factor of degree at most $n-j+\ell$.
\end{theorem}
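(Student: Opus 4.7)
\textit{Proof plan.}
My approach is to adapt the Dumas-theoretic proof of the univariate Theorem~\ref{thD}, replacing the $p$-adic valuation on $\mathbb{Z}$ by the degree-at-infinity valuation on $\K[x]$. I equip $\K(x)$ with the non-Archimedean valuation $v$ defined by $v(p/q) = \deg q - \deg p$ for nonzero $p,q \in \K[x]$, regard $f \in \K[x][y] \subset \K(x)[y]$, and form the Newton polygon $\mathcal{N}(f)$, that is, the lower convex hull in $\mathbb{R}^{2}$ of the points $\{(i,-\deg a_i) : a_i\neq 0\}$. Since $a_j\in \K^\times$, the point $(j,0)$ appears among these, and the coprimality $\gcd(j-\ell,\deg a_\ell)=1$ will play the role of the analogous coprimality $\gcd(j-\ell,k)=1$ used in Theorem~\ref{thD}.

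The central combinatorial step is to identify a distinguished edge of $\mathcal{N}(f)$. The maximality $\deg a_\ell/(j-\ell) = \max_{0 \le i \le j-1}\deg a_i/(j-i)$ together with the minimality of $\ell$ is exactly the statement that, among the secants from $(j,0)$ to the points $(i,-\deg a_i)$ with $0 \le i < j$, the one of steepest positive slope passes through $(\ell, -\deg a_\ell)$ and no point with $i<\ell$ lies on or below that secant. Hence the segment $E$ from $(\ell,-\deg a_\ell)$ to $(j,0)$ is the rightmost edge of $\mathcal{N}(f)$ ending at $(j,0)$; its slope is $\sigma = \deg a_\ell/(j-\ell)$, in lowest terms by the coprimality hypothesis, and its horizontal length is $j-\ell$. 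Given a factorization $f=gh$ in $\K[x,y]$, the same identity is valid in $\K(x)[y]$, so Dumas' theorem yields $\mathcal{N}(f) = \mathcal{N}(g) + \mathcal{N}(h)$ as a Minkowski sum. The edge $E$ then decomposes into slope-$\sigma$ edges in $\mathcal{N}(g)$ and $\mathcal{N}(h)$ whose horizontal lengths $L_g, L_h$ are non-negative integers summing to $j-\ell$; since each such length must be a multiple of the denominator $j-\ell$ of $\sigma$, one has $\{L_g,L_h\}=\{0,j-\ell\}$. Relabelling so that $L_g = j-\ell$ forces $\deg_y g \ge j-\ell$, and therefore $\deg_y h \le n - (j-\ell) = n - j + \ell$, as required.

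The step I expect to be the main obstacle is the verification that $E$ is genuinely an edge of the \emph{full} polygon $\mathcal{N}(f)$, and not merely of the polygon of the truncation $\tilde f = a_0 + a_1 y + \cdots + a_j y^j$: the hypothesis controls only $a_0,\ldots,a_{j-1}$, while the points $(i,-\deg a_i)$ with $j < i \le n$ are unconstrained and could, a priori, drag the lower convex hull of $f$ below $E$. When $j = n$ this difficulty does not arise since $(n,0)$ is automatically the rightmost vertex; for $j<n$ I plan to close the gap either by a direct slope-comparison argument eliminating the problematic configurations, or, failing that, by lifting the analysis to the bivariate Newton polytope of $f$ in the $(x,y)$-exponent plane, where the primitivity afforded by $\gcd(j-\ell,\deg a_\ell)=1$ again forces the corresponding indivisible edge to be absorbed entirely by one factor.
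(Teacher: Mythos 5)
Your plan is essentially the paper's own proof: the same degree valuation $v(a)=-\deg a$ on $\K[x]$, the same Newton polygon, the same distinguished segment $E$ from $(\ell,-\deg a_\ell)$ to $(j,0)$, and the same appeal to a bivariate Dumas theorem. The only point at which you depart from the paper is that you explicitly refuse to take on faith that $E$ is an edge of the \emph{full} polygon $\mathcal{N}(f)$ rather than of the truncation $a_0+\cdots+a_jy^j$. That instinct is exactly right, and unfortunately the obstacle you flagged cannot be closed: the step fails whenever $j<n$, and so does the theorem. Write $\sigma=\deg a_\ell/(j-\ell)$ and let $L$ be the line through $(\ell,-\deg a_\ell)$ and $(j,0)$. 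In every non-vacuous instance $\sigma>0$ (if $\deg a_\ell=0$ the gcd condition forces $j=\ell+1$ and the conclusion is trivial), so at abscissa $i>j$ the line $L$ sits at positive height $\sigma(i-j)$, while every point of the Newton diagram has ordinate $-\deg a_i\leq 0$. Hence \emph{all} points with $i>j$ — in particular $(n,-\deg a_n)$ — lie strictly below $L$, and $E$ is never an edge of the lower hull when $j<n$. This is precisely where the analogy with Theorem \ref{thD} breaks: there the valuation $v_p$ is non-negative and the distinguished edge has negative slope into $(j,0)$, so its rightward prolongation dips below $0$, where no point $(i,v_p(a_i))$ can lie; with $v=-\deg$ all signs reverse.

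No repair is possible, because the statement is false for $j<n$. Over $\K=\mathbb{Q}$ take
\begin{eqnarray*}
f(x,y)&=&\bigl(xy^2+y+x^2\bigr)\bigl((x+1)y^2-y-x^2-x\bigr)\\
&=&(x^2+x)y^4+y^3-y^2-(2x^2+x)y-x^4-x^3.
\end{eqnarray*}
Here $n=4$, and with $j=3$, $\ell=0$ one checks $a_3=1\in\mathbb{Q}^\times$, $\deg a_0/3=4/3=\max\{4/3,\,2/2,\,0/1\}$ (attained only at $i=0$, so $\ell=0$ is least), and $\gcd(3,4)=1$; the theorem would then give a factor of degree at most $n-j+\ell=1$. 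Yet both displayed factors have degree $2$ in $y$ (and total degree $3$), and each is irreducible, since each is primitive over $\mathbb{Q}[x]$ and the discriminants $1-4x^3$ and $1+4x(x+1)^2$ are non-squares in $\mathbb{Q}(x)$; so $f$ has no nonunit factor of degree $\leq 1$ at all.

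You should know that the paper's proof is no better off: it deduces that the segment $AB$ is an edge of $NP(f)$ from the slope computation for $i<j$ alone and never examines $i>j$, thereby silently committing the very step you declined to assume. Both the argument and the theorem are correct when $j=n$ (then $(j,0)$ is the rightmost point and your Minkowski-sum reasoning goes through verbatim), which is the only case exercised in the paper's examples; for $j<n$ an additional hypothesis controlling $\deg a_i$ for $i>j$ would be required.
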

Rest of the paper is organized as follows. The proofs of our main results (Theorems \ref{th1}-\ref{th5}) are given in Section \ref{sec:2}. In Section \ref{sec:3}, some examples of polynomials are discussed whose factorization properties may be deduced using our result.
\section{Proofs of Theorems \ref{th1}-\ref{th5}.}\label{sec:2}
To prove Theorem \ref{th1}, we recall that for any polynomial $f=a_0+a_1x+\cdots +a_nx^n\in \mathbb Z[x]$, and
any $x\in \mathbb{C}$ satisfying $|x|\geq H_f+1$, we have $-1/|x|>-1/(H_f+1)$; $-|a_i|/|a_n|\geq (-H_f)$. Consequently, we have \begin{eqnarray*}
\frac{|f(x)|}{|a_n||x|^n} &\geq &1-\sum_{i=0}^{n-1}\frac{|a_i|}{|a_n||x|^{n-i}}\geq  1-\sum_{i=0}^{n-1}\frac{H_f}{(H_f+1)^{n-i}}=\frac{1}{(H_f+1)^{n}}>0,
\end{eqnarray*}
which shows that each zero $\theta$ of $f$ satisfies $|\theta|<H_f+1$.
 \begin{proof}[\bf Proof of Theorem \ref{th1}] Suppose that $f(x)=f_1(x)f_2(x)\cdots f_r(x)$ is a product of $r$  irreducible polynomials $f_i\in\mathbb Z[x]$, $1\leq i\leq r$. We may assume without loss of generality that each of $f_i$'s is nonconstant and that $r\geq 2$. We  have $1\leq |f(m)|=|f_1(m)||f_2(m)|\cdots |f_r(m)|$, and so, $|f_i(m)|\geq 1$ for each $i$. Further, for each $i=1,\ldots, r$, if $\alpha_i$ is the leading coefficient of $f_i$, then we may express $f_i(x)$ as $f_i(x)=\alpha_i\prod_{\theta}(x-\theta)$, where the product runs over all the zeros of $f_i$. Combining this with the hypothesis that $m\geq H_f+d+1$ and the fact that each  zero $\theta$ of $f$ (and hence each zero of $f_i$) satisfies $|\theta|<H_f +1$ or $-|\theta|>-(H_f+1)$, we arrive at the following:
 \begin{eqnarray*}
 |f_i(m)| = |\alpha_i|\prod_{\theta}|m-\theta|&\geq&  |\alpha_i|\prod_{\theta}(m-|\theta|)\\&>&|\alpha_i|\prod_{\theta}(H_f+d+1-(H_f+1))\\&=&|\alpha_i|d^{\deg(f_i)}\geq d.
 \end{eqnarray*}
Consequently, we have $|f_i(m)|>d\geq 1$ for all $i=1,\ldots,r$. This in view of the fact that $d\| f(m)$ and that $|f(m)/d|>1$, shows that there exists a prime factor  of $|f_i(m)|$ coprime to $d$ for each $i=1,\ldots, r$.  Thus, we have $r\leq \nu_m$.
\end{proof}
%\section{Proof of Theorem \ref{th2}.}
\begin{proof}[\bf Proof of Theorem \ref{th2}]  As before, we assume that $f(x)=f_1(x)f_2(x)\cdots f_r(x)$ is a product of $r$ irreducible polynomials $f_i\in\mathbb Z[x]$, $1\leq i\leq r$. We may assume without loss of generality that each $f_i$ is nonconstant and $r\geq 2$. Since $|f(m)/d|=p^k>1$ and $p\nmid d$, it follows that $d\|f(m)$. By Theorem \ref{th1}, we have $r\leq \nu_m=k$. Further, proceeding as in the proof of Theorem  \ref{th1}, we find that $|f_i(m)|>d$ for all $i=1,\ldots,r$. Consequently, $p$ divides $|f_i(m)|$ for all $i=1,\ldots,r$.

Suppose on the contrary that $r>j$. Then we have
\begin{eqnarray*}
\frac{f^{(j)}(m)}{j!} &=& \sum_{i_1+\ldots+i_r=j} \frac{f_1^{(i_1)}(m)}{i_1!}\cdots \frac{f_r^{(i_r)}(m)}{i_r!},
\end{eqnarray*}
where each of the indices under the summation satisfies $0\leq i_s\leq r$, $s=1,\ldots,r$. This in view of the fact that $r>j$ tells us that for each such $r$-tuple $i_1,\ldots,i_r$, there exists at least one index $s\in \{1,\ldots,r\}$  for which $i_s=0$. Corresponding to this particular index $s$, we have $f_s(m)=\frac{f_s^{(i_s)}(m)}{i_s!}$  is a factor of the product $\frac{f_1^{(i_1)}(m)}{i_1!}\cdots \frac{f_r^{(i_r)}(m)}{i_r!}$, which in view of the fact that $p$ divides $|f_s(m)|$  shows that $p$ divides the number $\frac{f_1^{(i_1)}(m)}{i_1!}\cdots \frac{f_r^{(i_r)}(m)}{i_r!}$. Consequently, $p$ divides the number $\sum_{i_1+\ldots+i_r=j} \frac{f_1^{(i_1)}(m)}{i_1!}\cdots \frac{f_r^{(i_r)}(m)}{i_r!}=\frac{f^{(j)}(m)}{j!}$, which contradicts the hypothesis, and so, we must have $r\leq j$.
\end{proof}
%\section{Proof of Theorem \ref{th3}.}
To prove Theorem \ref{th3}, we will make use of the following two lemmas of Murty \cite{Mu}.
\begin{lemmax}[\cite{Mu}]\label{lemma1}
    Let $f=a_0+a_1 x+\cdots+a_n x^n\in \mathbb{Z}[x]$ with $n\geq 2$, $a_n\geq 1$, $a_{n-1}\geq 0$, $a_i\leq H_f$ for some positive real number $H_f$ for each $i=0,\ldots,n-2$. Then any complex zero of $f$ either has nonpositive real part or
    \begin{eqnarray*}
|\alpha|<\frac{1+\sqrt{1+4H_f}}{2}.
    \end{eqnarray*}
\end{lemmax}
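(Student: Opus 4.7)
The plan is to analyze the equation $f(\alpha)=0$ by separating the two leading terms $a_n\alpha^n+a_{n-1}\alpha^{n-1}$ from the remaining lower-order contributions, and to balance a lower bound on the former (using the sign conditions $a_n\geq 1$, $a_{n-1}\geq 0$ together with the positivity of $\operatorname{Re}(\alpha)$) against an upper bound on the latter (using the hypothesis $|a_i|\leq H_f$ for $i\leq n-2$ combined with a geometric-series estimate). This will deliver a quadratic inequality in $r:=|\alpha|$ whose positive root is exactly $(1+\sqrt{1+4H_f})/2$.

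First I would dispose of the case $r\leq 1$, in which the conclusion is immediate since $(1+\sqrt{1+4H_f})/2>1$ whenever $H_f>0$. So I may assume $r>1$ and $\operatorname{Re}(\alpha)>0$. From $f(\alpha)=0$ and the triangle inequality one obtains
\begin{equation*}
|a_n\alpha^n + a_{n-1}\alpha^{n-1}| \;=\; \left|\sum_{i=0}^{n-2} a_i\alpha^i\right| \;\leq\; H_f(1+r+\cdots+r^{n-2}) \;<\; \frac{H_f\,r^{n-1}}{r-1},
\end{equation*}
the final step being the summation of the finite geometric series. Dividing by $r^{n-2}$ yields $|a_n\alpha^2+a_{n-1}\alpha| < H_f\,r/(r-1)$.

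For the matching lower bound I would write $\alpha=x+iy$ with $x>0$ and observe that
\begin{equation*}
|a_n\alpha+a_{n-1}|^2 \;=\; (a_nx+a_{n-1})^2 + a_n^2 y^2 \;=\; a_n^2 r^2 + 2 a_n a_{n-1} x + a_{n-1}^2 \;\geq\; a_n^2 r^2,
\end{equation*}
so that $|a_n\alpha^2+a_{n-1}\alpha| = r\,|a_n\alpha+a_{n-1}|\geq a_n r^2\geq r^2$. Combining the two inequalities produces $r^2 < H_f\,r/(r-1)$, equivalently $r^2-r<H_f$, which yields $r<(1+\sqrt{1+4H_f})/2$ as required. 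The argument is otherwise routine; the only point requiring mild care is preserving the strict inequality throughout, which is automatic because the finite geometric sum has only $n-1$ terms and is therefore strictly dominated by $r^{n-1}/(r-1)$.
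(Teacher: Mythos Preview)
Your argument is correct and is essentially the standard proof from Murty's paper \cite{Mu}, which the present paper merely cites without reproducing. The only cosmetic difference is that Murty divides through by $\alpha^{n}$ and bounds $\bigl|a_n+a_{n-1}/\alpha\bigr|$ from below via its real part, whereas you divide by $r^{n-2}$ and bound $|a_n\alpha+a_{n-1}|$ by expanding its square; the two computations are equivalent and lead to the same quadratic inequality $r^{2}-r<H_f$.
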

 If we take $\alpha=u+\iota v$, $u,v\in \mathbb{R}$ in Lemma \ref{lemma1}, then the condition $u\leq 0$ implies that $|b-\alpha|\geq |\mathfrak{R}(b-\alpha)|=b-u\geq b>1$. On the other hand if $u>0$ and $h_f\geq 2$, then we observe that $\frac{1+\sqrt{1+4H_f}}{2}\leq H_f$. Consequently,  if we take $b\geq (H_f+1)\geq 3$,  then we have $|b-\alpha|\geq b-|\alpha|> b-\frac{1+\sqrt{1+4H_f}}{2}=\geq H_f+1-H_f=1$. So, for $b\geq 3$, we have $|b-\alpha|>1$.
\begin{lemmax}[\cite{Mu}]\label{lemma2}
   Suppose that $\alpha$ is a zero of the polynomial  $f=a_0+a_1 x+\cdots+a_n x^n$ with coefficients $a_i\in \{0,1\}$, $a_n=1$. If $\text{arg}(\alpha)\leq \pi/4$, then $|\alpha|<3/2$. Otherwise real part of $\alpha$ is less than $\frac{1+\sqrt{5}}{2\sqrt{2}}<3/2$.
\end{lemmax}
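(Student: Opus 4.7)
The plan is to split into the two cases stated in the lemma. Case~2 ($\arg(\alpha) > \pi/4$) is short: if $\operatorname{Re}(\alpha) \le 0$ then the bound $\operatorname{Re}(\alpha) < (1+\sqrt{5})/(2\sqrt{2})$ is immediate, while for $\operatorname{Re}(\alpha) > 0$ I would invoke Lemma~\ref{lemma1} with $H_f = 1$ (applicable since $a_i \in \{0,1\}$, $a_n = 1 \ge 1$, and $a_{n-1} \ge 0$) to obtain $|\alpha| < (1+\sqrt{5})/2$. Combining this with $\cos(\arg \alpha) < \cos(\pi/4) = 1/\sqrt{2}$ yields
\[
\operatorname{Re}(\alpha) = |\alpha|\cos(\arg\alpha) < \frac{1+\sqrt{5}}{2}\cdot\frac{1}{\sqrt{2}} = \frac{1+\sqrt{5}}{2\sqrt{2}},
\]
and the numerical comparison $(1+\sqrt{5})/(2\sqrt{2}) < 3/2$ is a direct check.

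The real work is Case~1. Assuming $\alpha \neq 0$, write $\alpha = re^{i\theta}$ with $r = |\alpha|$ and $|\theta| \le \pi/4$. The relation $f(\alpha)=0$ combined with $a_n = 1$ rearranges to $\alpha^n = -\sum_{i=0}^{n-1} a_i\,\alpha^i$; dividing by $\alpha^n$, reindexing $k = n - i$, and taking real parts produces the identity
\[
\sum_{k=1}^{n} a_{n-k}\, r^{-k} \cos(k\theta) = -1.
\]
Since every $a_{n-k} \in \{0,1\}$ and $r^{-k} > 0$, the indices $k$ with $\cos(k\theta) \ge 0$ contribute nonnegatively to the left-hand side; the burden of producing $-1$ therefore falls entirely on those $k$ with $\cos(k\theta) < 0$. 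The hypothesis $|\theta| \le \pi/4$ forces $|k\theta| \le \pi/2$ for $k \in \{1,2\}$, so $\cos(k\theta) \ge 0$ for $k = 1,2$ and only $k \ge 3$ can contribute negatively. Bounding $a_{n-k} \le 1$ and $|\cos(k\theta)| \le 1$, and extending the finite sum to the full geometric tail, I would deduce
\[
1 \;\le\; \sum_{\substack{k \ge 3 \\ \cos(k\theta) < 0}} a_{n-k}\, r^{-k}\, |\cos(k\theta)| \;\le\; \sum_{k=3}^{\infty} r^{-k} \;=\; \frac{1}{r^{2}(r-1)}.
\]
This forces $r^{2}(r-1) \le 1$. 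Since $r^{2}(r-1)$ is strictly increasing for $r > 2/3$ and evaluates to $9/8 > 1$ at $r = 3/2$, the conclusion $r < 3/2$ follows.

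The main obstacle is spotting the right identity to take real parts of — namely, dividing $f(\alpha)=0$ by $\alpha^n$ rather than manipulating $f(\alpha)$ directly — so that the $\{0,1\}$-coefficient constraint together with the angular hypothesis $|\theta| \le \pi/4$ conspire to eliminate the $k=1,2$ indices, leaving only a geometric tail whose comparison to $1$ delivers the sharp threshold $|\alpha| < 3/2$.
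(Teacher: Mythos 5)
The paper itself gives no proof of this lemma --- it is quoted verbatim from Murty's article --- so there is nothing internal to compare against; measured against the standard source argument, your proof is correct and is essentially Murty's own: for $|\arg\alpha|\le\pi/4$ the terms $k=1,2$ have $\Re(\alpha^{-k})\ge 0$, so the identity $\sum_{k=1}^{n}a_{n-k}\,\Re(\alpha^{-k})=-1$ forces the tail $\sum_{k\ge 3}r^{-k}=1/(r^{2}(r-1))$ to be at least $1$, giving $r<3/2$; and the second case follows from Lemma~\ref{lemma1} with $H_f=1$ exactly as you say. The only pedantic gaps are that the geometric-series evaluation requires $r>1$ (if $r\le 1$ the conclusion $|\alpha|<3/2$ is immediate, so this costs nothing) and that Lemma~\ref{lemma1} as stated assumes $n\ge 2$, with $n=1$ handled by inspection.
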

\begin{proof}[\bf Proof of Theorem \ref{th3}] Let $f(x)=f_1(x)f_2(x)\cdots f_r(x)$ be a product of $r$ irreducible polynomials in $f_i\in\mathbb Z[x]$, $1\leq i\leq r$. We may assume without loss of generality that each of $f_i$'s is nonconstant and $r\geq 2$. Then $N=f_1(b)\cdots f_r(b)$ which shows that $|f_i(b)|\geq 1$ for each $i$. Let $f_i(x)=c_i \prod_{\theta} (x-\alpha)$ where $c_i$ is the leading coefficient of $f_i$ and $\alpha$ runs over all zeros of $f_i$.

First we assume that $b>2$. Then in view of the discussion following Lemma \ref{lemma1}, we have that  each zeros $\alpha$ of $f_i$ satisfies $|b-\alpha|>1$. Consequently,  we have
\begin{eqnarray*}
|f_i(b)|=|c_i|\prod_\alpha |b-\alpha|>|c_i|\geq 1,
\end{eqnarray*}
which shows that $f_i(b)$ is a nontrivial factor of $f(b)=N$, and so, $r\leq \nu_N$.

Now assume that $b=2$ so that we may take $H_f=1$. Let $v=3/2$. By Lemma \ref{lemma2}, each zero $\alpha$ of $f$ satisfies $\mathfrak{R}(\alpha)<v$. Fix $i\in \{1,\ldots,r\}$, and define
\begin{eqnarray*}
g_i(x)=f_i(x+v) &=& c_i\prod_\alpha (x+v-\alpha),
\end{eqnarray*}
wherein $\alpha$ runs over all zeros of $f_i$. If a zero $\alpha$ of $f_i$ is real then the polynomial $x+v-\alpha$ has all nonnegative coefficients. On the other hand if a zero $\alpha$ of $f_i$ is complex, then the complex conjugate $\bar{\alpha}$ is also a zero of $f_i$, and we have
\begin{eqnarray*}
(x+v-\alpha)(x+v-\bar{\alpha})=x^2+2\mathfrak{R}(v-\alpha)x+|v-\alpha|^2,
\end{eqnarray*}
which has again nonnegative coefficients. These observations show that $g_i$ is a polynomial having either all nonnegative coefficients or all nonpositive coefficients depending upon the sign of $c_i$. So, $|g_i(-x)|\leq |g_i(x)|$ for all $x>0$. Since $v=3/2<2$, we have
\begin{eqnarray*}
|f_i(1)|=|g_i(-1/2)|=|g_i(-2+v)|<|g_i(2-v)|=|g_i(1/2)|=|f_i(2)|.
\end{eqnarray*}
Since $f_i$ is irreducible in $\mathbb{Z}[x]$, we must have $f_i(1)\neq 0$. So, we have $1\leq |f_i(1)|<|f_i(2)|$. Thus, we have $|f_i(2)|>1$ for each $i=1,\ldots,r$, and so, $r\leq \nu_N$.
\end{proof}
To prove Theorem \ref{th4}, we define an absolute value $\|a\|$ of any nonzero element $a\in \K[x]$ to be $\rho^{\deg a}$ where $\deg 0=-\infty$ and $\rho^{-\infty}=0$. For $a,b\in \K[x]$, define $\|a/b\|=\|a\|/\|b\|$, which extends $\|\cdot\|$ from $\K[x]$ to $\K(x)$. We fix an algebraic closure $\overline{\K(x)}$ of $\K(x)$, and we fix an extension of the absolute value $\|\cdot\|$ from $\K(x)$ to $\overline{\K(x)}$ (see \cite{EP}) which we shall also denote by $\|\cdot\|$. The absolute value $\|\cdot\|$ is non-Archimedean, that is, for all $a,b\in \overline{\K(x)}$, we have $\|a+b\|\leq \max\{\|a\|,\|b\|\}$. Observe that $\|\cdot\|$ satisfies the triangle inequality and $\|\cdot\|\geq 1$ on $\K[x]$. Further, $\|ab\|=\|a\|\|b\|$ for all $a,b\in \overline{\K(x)}$. We shall make use of these properties of $\|\cdot\|$ to prove Theorem \ref{th4} as follows:
\begin{proof}[\bf Proof of Theorem \ref{th4}] First note that $\deg a\geq  \log (H_f+2)/\log \rho$ if and only if $\|a\|\geq H_f+2$.  For any $y\in \overline{\K(x)}$ satisfying $\|y\|\geq H_f+1$, we have on using properties of $\|\cdot\|$ that
\begin{eqnarray*}
\frac{\|f(x,y)\|}{\|y^n a_n\|} \geq 1-\sum_{i=1}^{n-1}\frac{\|a_i\|}{\|a_n\|}\frac{1}{\|y^{n-i}\|}
 &= & 1-\sum_{i=1}^{n-1}\rho^{\deg a_i-\deg a_n}\frac{1}{\|y\|^{n-i}}\\
 &\geq & 1-\rho^{\max_{i}\deg a_i-\deg a_n}\sum_{i=1}^{n-1}\frac{1}{H_f^{n-i}}\\
   &\geq & 1-H_f\sum_{i=1}^{n-1}\frac{1}{H_f^{n-i}}= \bigl(H_f+1\bigr)^{-n}>0.
\end{eqnarray*}
This shows that any zero $\theta\in \overline{\K(x)}$ of $f(x,y)=a_0+a_1 y+\cdots+a_ny^n\in \K[x,y]$ satisfies $\|\theta\|<H_f+1$.

Now assume that $f(x,y)=f_1(x,y)\cdots f_r(x,y)$ be a product of $r$ irreducible polynomials $f_i$'s in $\K[x,y]$. Then $\|f_i(x,a(x))\|\geq 1$ for every $i=1,\ldots,r$. If $\alpha_i(x)\in \K[x]$ is the leading coefficient of $f_i$, then we may write
\begin{eqnarray*}
f_i(x,y) &=& \alpha_i(x)\prod_{\theta} (y-\theta),
\end{eqnarray*}
where the product runs over all zeros of $f_i$ in $\overline{\K(x)}$. We then have
\begin{eqnarray*}
\|f_i(x,a(x))\| = \|\alpha_i(x)\|\prod_{\theta} \|a(x)-\theta\|&\geq& \prod_{\theta} (\|a(x)\|-\|\theta\|)\\
&>& \prod_{\theta} (H_f+2-(H_f+1))=1,
\end{eqnarray*}
which shows that $\deg f_i(x,a(x))>0$ for each $i$. Consequently, it follows from the equality $f(x,a(x))=f_1(x,a(x))\cdots f_r(x,a(x))$ that there must be an irreducible factor of $f(x,a(x))$  dividing $f_i(x,a(x))$ for each $i$. This proves that $r\leq \nu_a$.
\end{proof}
To prove Theorem \ref{th5}, we will use the degree valuation $v$ on $\K[x]$ by defining $v(a(x))=-\deg a$ for all $a\in \K[x]$.  The Newton polygon $NP(f)$ of a polynomial $f=a_0(x)+a_1(x)y+\cdots+a_n(x)y^n\in \K[x,y]$ with $a_0a_n\neq 0$ is then defined as the lower convex hull of the points (called vertices of $NP(f)$) $(i,v(a_i))$, $0\leq i\leq n$ and $a_i\neq 0$. With these notions, we have the following bivariate version of Dumas Theorem \cite{Du} whose proof is parallel to that in the univariate case.
\begin{thmx}[Dumas]\label{thE}
Let $\K$ be a field. Let $g$, $h$ be nonconstant polynomials in $\K[x,y]$ such that $g(x,0)h(x,0)\neq 0$. Then the edges in the Newton polygon of $f(x,y)=g(x,y)h(x,y)$ with respect to the degree valuation $v$ may be formed by constructing a polygonal path composed by translates of all the edges that appear
in $NP(g)$ and $NP(h)$ with respect to $v$, using
exactly one translate for each edge, in such a way as to form a
polygonal path with increasing slopes.
\end{thmx}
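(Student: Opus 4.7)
The plan is to apply Dumas' theorem (Theorem~\ref{thE}) to the Newton polygon $NP(f)$ of $f$ taken with respect to the degree valuation $v=-\deg$, and to read off the factor-degree bound from a single edge of $NP(f)$ whose slope is in lowest terms.

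First I would identify the relevant edge. Because $a_j\in \K^\times$ we have $v(a_j)=0$, so the point $P_j:=(j,0)$ belongs to the vertex set defining $NP(f)$. The maximality hypothesis
\[
\frac{\deg a_\ell}{j-\ell}\;=\;\max_{0\le i\le j-1}\Bigl\{\frac{\deg a_i}{j-i}\Bigr\}
\]
rearranges to the assertion that every point $(i,-\deg a_i)$ with $0\le i\le j-1$ and $a_i\ne 0$ lies on or above the line $L$ through $P_\ell:=(\ell,-\deg a_\ell)$ and $P_j$. Hence the segment $\overline{P_\ell P_j}$---perhaps extended leftward through further collinear vertices---lies on the lower boundary of the convex hull of the vertices of $NP(f)$ with $i\le j$, making it an edge (or part of an edge) of $NP(f)$ of slope $s:=\deg a_\ell/(j-\ell)$. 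The gcd hypothesis $\gcd(j-\ell,\deg a_\ell)=1$ then guarantees that $s$ is written in lowest terms.

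Next I would invoke Dumas. Given a factorization $f=gh$ with $g,h\in \K[x,y]$ nonconstant in $y$, the identity $g(x,0)h(x,0)=f(x,0)=a_0(x)\ne 0$ makes Theorem~\ref{thE} applicable: the edges of $NP(f)$ are precisely the concatenation, in order of increasing slope and up to translation, of the edges of $NP(g)$ and $NP(h)$. The slope-$s$ portion of $NP(f)$ has horizontal length at least $j-\ell$ and decomposes as a sum of slope-$s$ edges drawn from $NP(g)$ and $NP(h)$. Because $s$ is in lowest terms and the vertices of any Newton polygon are lattice points, each such contributing edge has horizontal length equal to a positive multiple of $j-\ell$. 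Consequently at least one of $NP(g), NP(h)$---without loss of generality $NP(h)$---carries an edge of slope $s$ of horizontal length $\ge j-\ell$, which forces $\deg_y h\ge j-\ell$ and hence $\deg_y g\le n-(j-\ell)=n-j+\ell$, the desired bound on the factor $g$.

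The main obstacle I anticipate is the first step: certifying rigorously that the segment $\overline{P_\ell P_j}$ sits on an edge of the full Newton polygon $NP(f)$, rather than merely on the lower boundary of the left truncation to indices $i\le j$. When $j=n$ this is automatic, since $P_j=(n,v(a_n))$ is then the rightmost vertex of $NP(f)$ and the segment is forced onto its rightmost edge. When $j<n$, the presence of points $(i,-\deg a_i)$ with $i>j$ could a priori force the lower hull of $NP(f)$ to bypass $P_j$; handling this case---either by an auxiliary argument that the slope-$s$ edge survives in the full polygon under the given hypotheses, or by passing to the truncation $\sum_{i\le j}a_i y^i$ and reconciling its Newton polygon with that of $f$ through Theorem~\ref{thE}---is the bookkeeping that will require care.
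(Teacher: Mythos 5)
Your proposal does not prove the statement in question. The statement to be established here is Theorem~\ref{thE} itself --- the bivariate Dumas theorem asserting that the edges of $NP(gh)$ are obtained by concatenating translates of the edges of $NP(g)$ and $NP(h)$ in order of increasing slope. Your argument instead \emph{invokes} Theorem~\ref{thE} as a known tool and uses it to derive the factor-degree bound of Theorem~\ref{th5}; as a proof of Theorem~\ref{thE} it is circular, and as written it proves a different result of the paper. (The paper itself offers no written proof of Theorem~\ref{thE}, remarking only that the argument is ``parallel to that in the univariate case.'')

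What is actually needed is the Newton-polygon additivity property. The standard route, which is what ``parallel to the univariate case'' points to, is valuation-theoretic: extend the degree valuation $v$ from $\K(x)$ to a fixed algebraic closure $\overline{\K(x)}$, write $g(x,y)=\beta(x)\prod_\mu (y-\mu)$ and $h(x,y)=\gamma(x)\prod_\nu (y-\nu)$ with roots in $\overline{\K(x)}$ (possible since $g(x,0)h(x,0)\neq 0$ guarantees nonzero constant terms, so no issues with vanishing extreme coefficients), and observe that the edges of $NP(g)$ are in bijection with the distinct values of $v$ on the roots $\mu$, each edge having slope equal to the common root valuation and horizontal length equal to the number of roots attaining it. Since the multiset of roots of $f=gh$ is the disjoint union of the multisets of roots of $g$ and $h$, and the valuation of the leading coefficient of $f$ is $v(\beta)+v(\gamma)$, the edge data of $NP(f)$ is exactly the merge, by increasing slope, of the edge data of $NP(g)$ and $NP(h)$. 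None of this appears in your proposal. Separately, the content of your proposal --- identifying the edge $\overline{P_\ell P_j}$, using the gcd condition to rule out interior lattice points, and concluding a factor of $y$-degree at least $j-\ell$ --- is essentially the paper's proof of Theorem~\ref{th5}, and your worry about whether that segment survives as an edge of the full polygon when $j<n$ is a legitimate point of care for \emph{that} theorem; but it is not the theorem you were asked to prove.
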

Theorem \ref{thE} in particular tells us that if there is an edge $AB$ joining the vertices $A(a,b)$ and $B(a',b')$ of $NP(f)$ such that there is no vertex on $AB$ other than $A$ and $B$, then any factorization $f=gh$ of $f$ has a factor of degree at least $|a-a'|$. Observe that there is no vertex on the segment $AB$ other than $A$ and $B$ if and only if $\gcd(|a-a'|,|b-b'|)=1$. We will use these observations to prove Theorem \ref{th5} as follows.
\begin{proof}[\bf Proof of Theorem \ref{th5}]
Since $a_j\in \K^\times$, we have $v(a_j)=0$ and slope of the segment joining the points $A(\ell,v(a_\ell))$ and $B(j,v(a_j))$ in the plane is
\begin{eqnarray*}
\frac{v(a_j)-v(a_\ell)}{j-\ell}=\frac{\deg a_\ell}{j-\ell}=\max_{0\leq i\leq j-1}\Bigl\{\frac{\deg a_i}{j-i}\Bigr\}=\max_{0\leq i\leq j-1}\Bigl\{\frac{v(a_j)-v(a_i)}{j-i}\Bigr\},
\end{eqnarray*}
Consequently, the segment $AB$ is one of the edges of $NP(f)$. Since, $\gcd(j-\ell,v(a_\ell))=1$, the segment $AB$ does not contain any vertex of $NP(f)$ other than the end points $A$ and $B$. By Theorem \ref{thE}, any factorization $f=gh$ of $f$ has a factor of degree at least $j-\ell$. So, the degree of the other factor must be at most $n-(j-\ell)$, as desired.
\end{proof}
\section{Examples}\label{sec:3}
In this section, we will provide some explicit examples of polynomials whose factorization properties may be deduced using Theorems \ref{th1}-\ref{th5}.
\begin{example}
The polynomial $f=x^8 + 6 x^7 + 5$ takes the value $f(108)=109\times 179244006092537$, a product of two primes. By Theorem   \ref{th1}, the polynomial $f$ is a product of at most two irreducible polynomials in $\mathbb Z[x]$. Since $f(-1)=0$, it follows that $x+1$ is a factor of $f$, and so, $f$ is a product of exactly $2$ irreducible factors in  $\mathbb Z[x]$.
\end{example}
\begin{example}
For any odd prime $p$, the polynomial
\begin{eqnarray*}
f=p^2-2px+x^2-p^2x^{2n},
\end{eqnarray*}
satisfies the hypothesis of Theorem \ref{th2} with $H_f=1$, $d=1$, $m=p\geq 3=2+H_f$, since $f(p)=p^{2(n+1)}$ which is coprime to $f''(p)/2!$, since $f''(p)/2!\equiv 1 \mod p$. So, $f$ is a product of at most $\min\{2(n+1),2\}=2$ irreducible polynomials in $\mathbb{Z}[x]$. Since we have
\begin{eqnarray*}
p^2-2px+x^2-p^2x^{2n}=(p-x+px^n)(-p+x+px^n),
\end{eqnarray*}
it follows that $f$ is a product of exactly two irreducible polynomials. This further proves that each of the polynomials $p-x+px^n$ and $-p+x+px^n$ must be irreducible in $\mathbb{Z}[x]$.
\end{example}
\begin{example} The number 9841 has $3$-adic expansion
\begin{eqnarray*}
13\times 757&=&9841= 1+3+3^2+3^3+3^4+3^5+3^6+3^7+3^8,
\end{eqnarray*}
where $9841$ is a product of two prime numbers 13 and 757. By Theorem \ref{th4}, the polynomial
\begin{eqnarray*}
f=1+x+x^2+x^3+x^4+x^5+x^6+x^7+x^8
\end{eqnarray*}
is a product of at most $\nu_{9841}=2$ irreducible factors in $\mathbb{Z}[x]$. Further, since
\begin{eqnarray*}
% \nonumber to remove numbering (before each equation)
  1+x+x^2+x^3+x^4+x^5+x^6+x^7+x^8 &=& (1+x+x^2)(1+x^3+x^6),
\end{eqnarray*}
the polynomial $f$ must be a product of exactly two irreducible factors, and so, each of the polynomials $1+x+x^2$ and $1+x^3+x^6$ must also be irreducible.
\end{example}
\begin{example}
For a prime number $p$, the bivariate polynomial
\begin{eqnarray*}
f(x,y)=1+pxy+py^n,~n\geq 2
\end{eqnarray*}
satisfies the hypothesis of Theorem \ref{th4} with $H_f=\rho^{\deg px}=\rho$, and $a(x)=x^2$, then on taking $\rho=2$, we have $\deg a(x)=2=\log (2+\rho)/\log \rho$. The polynomial $f(x,a(x))=f(x,x^2)=1+px^3+px^{2n}$ is irreducible in $\mathbb{Q}[x]$, since the reciprocal polynomial of $f(x,x^2)$ is Eisensteinian with respect to the prime $p$. So, $\nu_a=1$. Thus, the polynomial $1+pxy+py^n$ is irreducible in $\mathbb{Q}[x,y]$.
\end{example}
\begin{example}
Now consider the bivariate polynomial
\begin{eqnarray*}
h(x,y)=1+xy+y^n\in\mathbb{Q}[x,y],~n\geq 1.
\end{eqnarray*}
We will establish the irreducibility of $h$ via that of its reciprocal polynomial $\tilde{h}$ with respect to $y$ using Theorem \ref{thD} and Theorem  \ref{th4} as follows. In view of Theorem \ref{th4}, we take $\rho=2$ so that $H_{\tilde{h}}=2$ and $a(x)=2x^2$ so that $\deg a(x)=2=\log(2+H_{\tilde{h}})/\log 2$. By Theorem \ref{th4} $\tilde{h}(x,y)=1+xy^{n-1}+y^n$ is a product of at most $\nu_a$ factors in $\mathbb{Q}[x,y]$, where $\nu_a$ is the number of irreducible factors of $g(x)=\tilde{h}(x,a(x))=1+2^{n-1}x^{2n-1}+2^nx^{2n}$ in $\mathbb{Q}[x]$. Observe that $\tilde{g}(x)=2^n+2^{n-1}x+x^{2n}$, which satisfies the hypothesis of Theorem \ref{thD} for $j=2n$, $k=n-1$, $\ell=1$, $\gcd(j-\ell,k)=\gcd(2n-1,n-1)=1$. So, the polynomial $\tilde{g}$ has a factor of degree at most $2n-j+\ell=1$. Since $\tilde{g}$ is monic and $\pm 1$ do not satisfy it, there is no factor of degree 1 of $\tilde{g}$ in $\mathbb{Q}[x]$. Thus, the polynomial $\tilde{g}$ and hence $g$ must be irreducible in $\mathbb{Q}[x]$, which  established that $\nu_a=1$. Consequently the polynomial $\tilde{h}$ and hence $h$ is irreducible in $\mathbb{Q}[x,y]$.

Observe that the polynomial $h=1+xy+y^n$ satisfies the hypothesis of Theorem \ref{th5} for $\ell=1$ and $j=n$, and so, any factorization of $h$ in $\mathbb{Q}[x,y]$ has a factor of degree at most $n-j+\ell=1$. Suppose on the contrary that $h$ has a factor of degree 1 and $\deg h>1$ as a polynomial in $y$. Then $n>1$ and
\begin{eqnarray}\label{e1}
1+xy+y^n=(c_0+c_1y+\cdots+c_{n-1}y^{n-1})(b_0+b_1y),
\end{eqnarray}
for some $b_0,b_1,c_i\in \mathbb{Q}[x]$, $0\leq i\leq n-1$ with $c_{n-1}b_1\neq 0$. On comparing the coefficients of like powers of $y$ on both sides of \eqref{e1}, we find that $c_0=b_0=\pm 1$, $c_1=0$, and correspondingly $b_1=\pm x$. Consequently,   $1=b_1 c_{n-1}=\pm c_{n-1}x$, which is absurd. We conclude that $h$ has no factor of degree 1 in $\mathbb{Q}[x,y]$, and so, $h$ is irreducible.

We mention here that a direct proof of the irreducibility of $1+xy+y^n$ was given recently in  \cite[Example 5]{NBJRG2024} using Newton-polygons for bivariate polynomials.
\end{example}
%\subsection{Acknowledgements} The authors is indebted to the referees.....
\subsection*{Disclosure statement}
The author reports to have no competing interests to declare.

\end{document}